\theoremstyle{plain}
\newtheorem{thm}{Theorem}[section]
\newtheorem{lem}[thm]{Lemma}
\newtheorem{prop}[thm]{Proposition}
\newtheorem{cor}[thm]{Corollary}
\newcounter{theoremalph}
\theoremstyle{definition}
\newtheorem{defn}[thm]{Definition}
\newtheorem{example}[thm]{Example}
\newtheorem{rem}[thm]{Remark}
\newtheorem{obs}[thm]{Observation}
\newtheorem{claim}[thm]{Claim}
\newtheorem{notation}[thm]{Notation}
\newcommand{\R}{\mathbb{R}}
\newcommand{\N}{\mathbb{N}}
\newcommand{\cC}{\mathcal{C}}
\newcommand{\cG}{\mathcal{G}}
\newcommand{\la}{\left\langle}
\newcommand{\ra}{\right\rangle}
\newsavebox{\@brx}
\newcommand{\llangle}[1][]{\savebox{\@brx}{\(\m@th{#1\langle}\)}%
  \mathopen{\copy\@brx\kern-0.5\wd\@brx\usebox{\@brx}}}
\newcommand{\rrangle}[1][]{\savebox{\@brx}{\(\m@th{#1\rangle}\)}%
  \mathclose{\copy\@brx\kern-0.5\wd\@brx\usebox{\@brx}}}
\tikzset{
	labl/.style={anchor=south, rotate=90, inner sep=.5mm}
}
\definecolor{amethyst}{rgb}{0.6, 0.4, 0.8}
\newcommand{\hide}[1]{}
\title{Failure of quasi-isometric rigidity for infinite-ended groups}
\author{Nir Lazarovich}
\author{Emily Stark}
\date{\today}
\begin{document}
  
  \begin{abstract}
    We prove that an infinite-ended group whose one-ended factors have finite-index subgroups and are in a family of groups with a nonzero multiplicative invariant is not quasi-isometrically rigid. 
    Combining this result with work of the first author proves that a residually-finite multi-ended hyperbolic group is quasi-isometrically rigid if and only if it is virtually free.
    The proof adapts an argument of Whyte for commensurability of free products of closed hyperbolic surface groups. 
  \end{abstract}

 \maketitle
 
\section{Introduction}


\vskip.2in

Infinite-ended groups display a strong form of quasi-isometric flexibility: Papasoglu--Whyte~\cite{papasogluwhyte} proved the quasi-isometry type of an infinite-ended group depends only on the quasi-isometry types of its one-ended factors. Commensurability classes, on the other hand, are often finer. For example, while there is one abstract commensurability class of closed hyperbolic surface groups, there are infinitely many abstract commensurability classes of their free products. Indeed, Whyte~\cite[Theorem 1.6]{whyte1999amenability} proved that free products of two hyperbolic surface groups are abstractly commensurable if and only if they have equal Euler characteristic. 

We adapt Whyte's argument to free products of groups with a nonzero multiplicative invariant, like Euler characteristic. Consequently, we show quasi-isometric rigidity fails for many infinite-ended groups, where a finitely generated group $G$ is {\it quasi-isometrically rigid} if any group quasi-isometric to~$G$ is abstractly commensurable to $G$, meaning the groups contain isomorphic finite-index subgroups. In particular, by~\cite{lazarovich23}, our main result gives the following:

\begin{thm} \label{thm:introhyp}
 A residually-finite multi-ended hyperbolic group is quasi-isometrically rigid if and only if it is virtually free.
\end{thm}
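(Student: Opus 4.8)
The plan is to prove both implications separately: the forward implication is elementary, while the converse is obtained by contraposition from the main theorem of this paper together with \cite{lazarovich23}.

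Suppose first that $G$ is multi-ended and virtually free, and let $H$ be any finitely generated group quasi-isometric to $G$. If $G$ is two-ended then it is virtually cyclic, $H$ is likewise two-ended and hence virtually cyclic, and both are commensurable to $\Z$. If instead $G$ is infinitely-ended, then a finite-index free subgroup of $G$ must have rank at least $2$ (rank $0$ or $1$ would force $G$ to be finite or two-ended), so $G$ is commensurable, and in particular quasi-isometric, to $F_2$. Then $H$ is quasi-isometric to a free group, so by Stallings' ends theorem and Dunwoody's accessibility $H$ is virtually free and infinitely-ended; hence $H$ too contains a finite-index free subgroup of rank at least $2$ and is commensurable to $F_2$. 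Since all free groups of rank at least $2$ are mutually commensurable, $H$ is commensurable to $G$ in every case, so $G$ is quasi-isometrically rigid.

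For the converse I prove the contrapositive: a multi-ended, residually-finite, hyperbolic group $G$ that is not virtually free is not quasi-isometrically rigid. A two-ended group is virtually cyclic and hence virtually free, so $G$ must be infinitely-ended. By Stallings' theorem $G$ splits over a finite subgroup, and by Dunwoody's accessibility this refines to a finite graph-of-groups decomposition with finite edge groups whose vertex groups are finite or one-ended; since $G$ is not virtually free at least one vertex group is one-ended. Each such one-ended factor is a quasiconvex subgroup of $G$, hence one-ended hyperbolic, and as a subgroup of a residually-finite group it is itself residually finite. It then remains to verify the hypotheses of the main theorem for these factors, namely that each has a finite-index subgroup lying in a family of groups carrying a nonzero multiplicative invariant. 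This is precisely the input supplied by \cite{lazarovich23}, which shows that a residually-finite one-ended hyperbolic group admits finite-index subgroups in such a family. Granting this, the main theorem applies to $G$ and directly produces a group quasi-isometric but not abstractly commensurable to $G$ (with the two distinguished by differing values of the invariant, the quasi-isometry type itself being pinned down by Papasoglu--Whyte \cite{papasogluwhyte}), so $G$ is not quasi-isometrically rigid.

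I expect the main obstacle to be the verification, for a \emph{general} residually-finite one-ended hyperbolic factor, that it has finite-index subgroups on which the multiplicative invariant is both defined and nonzero; this is exactly where residual finiteness is indispensable and is the content imported from \cite{lazarovich23}. Once that is in hand, the remaining work---realizing infinitely many commensurability classes inside a single quasi-isometry class by prescribing different values of the invariant while holding the one-ended factors fixed up to quasi-isometry---is carried out by the main theorem, whose proof adapts Whyte's commensurability argument \cite{whyte1999amenability} for free products of closed hyperbolic surface groups.
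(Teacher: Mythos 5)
Your proposal is correct and follows essentially the same route as the paper: the forward direction is the standard quasi-isometric rigidity of virtually free groups (which the paper treats as known), and the converse reduces via the Stallings--Dunwoody decomposition to the main theorem, with \cite{lazarovich23} supplying the nonzero multiplicative invariant. One small clarification of the division of labor: residual finiteness is needed only to produce \emph{proper} finite-index subgroups of the one-ended factors, while the invariant from \cite{lazarovich23} is defined on the family of one-ended hyperbolic groups of asymptotic dimension at most $m$ (with $m$ a uniform bound over the factors furnished by Roe's theorem) and exists independently of residual finiteness --- your writeup slightly conflates these two inputs, but the combined argument is the paper's.
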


Whether all hyperbolic groups are residually finite is an open question of considerable interest in the field. 
We remark that in the proof of Theorem~\ref{thm:introhyp} residual-finiteness is only used to ensure that the one-ended factors of the multi-ended hyperbolic group have some proper finite-index subgroup. 


Quasi-isometric rigidity is a driving problem in geometric group theory.  Many well-studied groups have been shown to be quasi-isometrically rigid, including non-abelian free groups~\cite{stallings,dunwoody1985accessibility}, fundamental groups of closed hyperbolic surfaces~\cite{tukia, gabai, cassonjungreis}, non-uniform lattices in rank-one symmetric spaces~\cite{Schwartz95}, mapping class groups~\cite{behrstockkleinerminskymosher}, and uniform lattices in certain hyperbolic buildings~\cite{bourdonpajot, haglund06, xie06}. Fundamental groups of closed hyperbolic $3$-manifolds and free products of closed hyperbolic surface groups are non-examples, and this paper gives many more. We refer the reader to the book of Drutu--Kapovich~\cite{drutukapovich} for additional background. 


  We show quasi-isometric rigidity for infinite-ended groups fails in the presense of a suitable \emph{multiplicative invariant} (also known as \emph{volume} cf. \cite{reznikov}). 

\begin{defn}
    Let $\cC$ be a family of groups closed under taking finite-index subgroups. A {\it multiplicative invariant on $\cC$} is a function $\rho:\cC \rightarrow \R$ so that if $G \in \cC$ and $H$ is a finite-index subgroup of $G$, then $\rho(H) = [G:H]\rho(G)$.
\end{defn}

\begin{example} The following are examples of multiplicative invariants. 
    \begin{enumerate}
        \item Euler characteristic for the family groups with a finite classifying space; see discussion on generalizations in work of Chiswell~\cite{chiswell};
        \item $\ell^2$-Betti numbers for the family groups with a classifying space with finite skeleta in each dimension; for background, see work of L\"{u}ck~\cite{luck};
        \item Volume for the family of fundamental groups of closed hyperbolic manifolds of dimension $n \geq 3$ by Mostow Rigidity~\cite{mostow_strongrigidity}
        \item Volume from an upper or lower volume in the sense of Reznikov~\cite[Section~1]{reznikov}. Examples include deficiency volume and rank volume~\cite{reznikov} and a volume from topological complexity due to Lazarovich~\cite{lazarovich23}. 
    \end{enumerate}
\end{example}

Let $\cC$ be a family of one-ended groups and suppose that $\cC$ is closed under taking finite-index subgroups. Suppose $\cC$ has a multiplicative invariant that is nonzero for every element of $\cC$. 

\begin{thm} \label{thm:intromain}
   Let $G_i \in \cC$, and suppose $H_i \leq G_i$ is a proper finite-index subgroup for some $n>1$ and $1 \leq i \leq n$. Then, the groups $G_1* \ldots * G_n$ and $H_1 * \ldots *H_n$ are quasi-isometric and not abstractly commensurable. 
\end{thm}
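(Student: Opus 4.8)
The plan is to prove the two assertions separately: the quasi-isometry is immediate from known results, while the failure of abstract commensurability is the substance and requires manufacturing two commensurability invariants out of $\rho$.

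\textbf{Quasi-isometry.} Since each $H_i$ has finite index in $G_i$, it is quasi-isometric to $G_i$ and in particular one-ended. Thus $A := G_1 * \cdots * G_n$ and $B := H_1 * \cdots * H_n$ have the same set of quasi-isometry types of one-ended factors, and each is a free product of $n \geq 2$ infinite one-ended groups, hence infinite-ended. By Papasoglu--Whyte~\cite{papasogluwhyte} they are quasi-isometric.

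\textbf{A multiplicative volume.} For any finite-index subgroup $\Gamma$ of a free product of $\cC$-groups, write its Grushko decomposition $\Gamma = F_r * L_1 * \cdots * L_m$ with $F_r$ free and each $L_j$ one-ended. Applying the Kurosh subgroup theorem to $\Gamma \leq A$, each $L_j$ is a finite-index subgroup of a conjugate of some $G_i$, hence lies in $\cC$. I define $\Phi(\Gamma) = \sum_j \rho(L_j)$; by uniqueness of the one-ended Grushko factors up to conjugacy and reordering, this depends only on the isomorphism type of $\Gamma$. The key computation is that $\Phi$ is multiplicative. For an index-$d$ subgroup $K \leq A$, Kurosh identifies the one-ended factors of $K$ with the groups $K \cap b G_i b^{-1}$ as $b$ runs over representatives of $K\backslash A/G_i$; the double-coset index formula gives $\sum_b [G_i : G_i \cap b^{-1}Kb] = [A:K]$ for each $i$, and combining this with $\rho(K \cap bG_ib^{-1}) = [G_i : G_i \cap b^{-1}Kb]\,\rho(G_i)$ yields $\Phi(K) = d \sum_i \rho(G_i) = d\,\Phi(A)$. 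The same computation restricted to the indices $i$ whose factors $G_i$ lie in a single abstract-commensurability class gives a refined, class-by-class identity that I will need at the end.

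\textbf{An Euler-type invariant.} Volume alone is insufficient: commensurability only forces $\Phi$ to be proportional, and the equation $d_A \Phi(A) = d_B \Phi(B)$ is solvable for any two nonzero volumes. I therefore also track the combinatorial Euler characteristic of the free splitting, setting $\chi_\rho(\Gamma) = \sum_j \rho(L_j) - (m-1) - r$ for $\Gamma = F_r * L_1 * \cdots * L_m$. This is again intrinsic by Grushko, and it is multiplicative: counting vertices and edges of $K \backslash T_A$ for the Bass--Serre tree $T_A$ of $A$ gives $r_K + N_K - 1 = d(n-1)$ (where $N_K$ is the number of one-ended factors and $r_K$ the free rank of $K$), whence $\chi_\rho(K) = \Phi(K) - d(n-1) = d\,\chi_\rho(A)$, with $\chi_\rho(A) = \Phi(A) - (n-1)$.

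\textbf{Conclusion and main obstacle.} Suppose toward a contradiction that $A$ and $B$ contain isomorphic finite-index subgroups $K \cong K'$ of indices $d_A, d_B$. Intrinsicness gives $\Phi(K) = \Phi(K')$ and $\chi_\rho(K) = \chi_\rho(K')$, which by multiplicativity become $d_A \Phi(A) = d_B \Phi(B)$ and $d_A \chi_\rho(A) = d_B \chi_\rho(B)$. Writing $e_i = [G_i : H_i]$ so that $\rho(H_i) = e_i \rho(G_i)$, and using $\chi_\rho = \Phi - (n-1)$, subtracting the two relations gives $d_A(n-1) = d_B(n-1)$, hence $d_A = d_B$ since $n > 1$; feeding this back yields $\Phi(A) = \Phi(B)$, i.e. $\sum_i (e_i - 1)\rho(G_i) = 0$. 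The hard part is precisely here: the $\rho(G_i)$ need not share a sign, so this single relation does not force $e_i = 1$. I resolve this with the per-commensurability-class refinement of $\Phi$: if $G_i$ and $G_j$ are abstractly commensurable, then comparing $\rho$ on a common finite-index subgroup shows $\rho(G_i)$ and $\rho(G_j)$ have the same sign, so within each commensurability class the relation $\sum_i (e_i-1)\rho(G_i) = 0$ is a sum of terms of one fixed sign with $e_i - 1 \geq 0$. Each term must therefore vanish, and since $\rho(G_i) \neq 0$ this forces $e_i = 1$ for every $i$, contradicting the hypothesis that some $H_i$ is a proper subgroup.
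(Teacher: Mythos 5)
Your proof is correct, and its core is the same as the paper's: both arguments hinge on extending $\rho$ to a multiplicative ``weighted Euler characteristic'' $\chi_\rho$ on free products and using it (together with the purely combinatorial count of factors and free rank) to force the two indices to be equal. The differences are in language and in the endgame. Where the paper runs the index computation through graphs of groups and covering spaces (Observation~\ref{obs:multpreimage} and the Claim $k=k'=\frac{1-r-n}{1-m}$ inside Theorem~\ref{thm:mainpaper}), you use the Kurosh subgroup theorem and the double-coset index formula; these are equivalent bookkeeping devices, and your identity $r_K+N_K-1=d(n-1)$ is exactly the paper's Claim. The genuine divergence is at the final step. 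After deducing $d_A=d_B$, one is left with $\sum_i(e_i-1)\rho(G_i)=0$, and the paper dismisses this by asserting $\chi_\rho(H)<\chi_\rho(G)$ ``because $\rho$ is a multiplicative invariant.'' That assertion is only justified when the $\rho(G_i)$ all have the same sign (as in all of the paper's examples, e.g.\ volumes, or the invariant $\underline{C}_{2,m}$ used for Theorem~\ref{thm:introhyp}); for a general nonvanishing $\rho$ on $\cC$ the signs could mix and the sum could vanish. Your per-commensurability-class refinement $\Phi_c$ --- which is again an isomorphism invariant by Grushko uniqueness, is multiplicative by the same double-coset computation, and within each class sees only terms of one sign since commensurable groups have $\rho$ of the same sign --- closes this gap and makes the argument work for $\rho$ of arbitrary sign. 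So your proof buys strictly more generality at the cost of one extra invariant; the paper's version is shorter but implicitly assumes a sign condition on $\rho$.
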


\begin{cor} \label{main cor}
    If $G$ is a finitely presented infinite-ended group that is not virtually free and whose one-ended factors are in $\cC$ and contain non-trivial finite-index subgroups, then $G$ is not quasi-isometrically rigid.
\end{cor}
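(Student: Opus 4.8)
The plan is to deduce the corollary from Theorem~\ref{thm:intromain} without re-running its internal invariant computation: I will produce two auxiliary groups that are each quasi-isometric to $G$ but, by the theorem, not abstractly commensurable to one another. If $G$ were quasi-isometrically rigid, both would be abstractly commensurable to $G$ and hence, by transitivity, to each other, contradicting the theorem.

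First I would record the free-product structure of $G$. Since $G$ is finitely presented it is accessible, so by Grushko's theorem it admits a free product decomposition
\[
G = G_1 * \cdots * G_n * F_k ,
\]
in which $F_k$ is free of rank $k$, the remaining freely indecomposable factors are finite or infinite cyclic, and the one-ended factors $G_1,\dots,G_n$ are finitely presented and, by hypothesis, lie in $\cC$. Because $G$ is infinite-ended the decomposition is nontrivial, and because $G$ is not virtually free at least one one-ended factor occurs, so $n\geq 1$. By hypothesis each $G_i$ contains a proper finite-index subgroup $H_i$, and since $\cC$ is closed under finite-index subgroups we have $H_i\in\cC$.

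Next I construct the comparison groups. Set
\[
A = G_1 * \cdots * G_n * G_1, \qquad B = H_1 * \cdots * H_n * H_1 ,
\]
free products of $n+1\geq 2$ one-ended members of $\cC$; the extra copy of $G_1$ (respectively $H_1$) is adjoined precisely so that there are at least two factors even when $n=1$, as required by Theorem~\ref{thm:intromain}. Each factor of $A$ lies in $\cC$ and each $H_i\leq G_i$ is a proper finite-index subgroup, so Theorem~\ref{thm:intromain} applies and shows that $A$ and $B$ are \emph{not} abstractly commensurable. On the other hand, $A$, $B$, and $G$ are all infinite-ended and not virtually free, and their one-ended factors represent the same set of quasi-isometry classes, namely $\{[G_1],\dots,[G_n]\}$: here $[H_i]=[G_i]$ because $H_i$ has finite index in $G_i$, while duplicating a factor or deleting the free part $F_k$ does not change this set. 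By the theorem of Papasoglu--Whyte~\cite{papasogluwhyte}, the quasi-isometry type of an infinite-ended group depends only on this set, so $A$, $B$, and $G$ are pairwise quasi-isometric. Assuming now, for contradiction, that $G$ is quasi-isometrically rigid, the quasi-isometries $A\sim G$ and $B\sim G$ force $A$ and $B$ each to be abstractly commensurable to $G$, hence to each other, contradicting the previous sentence. Therefore $G$ is not quasi-isometrically rigid.

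The step requiring the most care is the quasi-isometry claim. I must invoke the Papasoglu--Whyte classification in the sharp form that, for finitely presented infinite-ended groups that are not virtually free, the quasi-isometry type is governed by the \emph{set} of quasi-isometry classes of one-ended factors, with multiplicities and free rank irrelevant; this is exactly what licenses both discarding the free part and adjoining a duplicate factor while remaining inside the quasi-isometry class of $G$. The remaining technical points are routine: checking that $A$ and $B$ (and the $G_i$) are finitely presented, are not virtually free, and have infinitely many ends, so that the classification and Theorem~\ref{thm:intromain} both apply. Note that comparing the auxiliary pure free products $A$ and $B$, rather than $G$ itself, is essential, since Theorem~\ref{thm:intromain} is stated only for free products of one-ended factors with no free part and with at least two factors.
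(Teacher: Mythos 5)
Your proposal is correct and follows essentially the same route as the paper: reduce to the genuine free products $A = G_1 * \cdots * G_n * G_1$ and $B = H_1 * \cdots * H_n * H_1$ (the duplicated factor handling $n=1$, exactly as in the paper's $G_1 * G_1 * \cdots * G_n$), note that Papasoglu--Whyte makes $G$, $A$, $B$ pairwise quasi-isometric, and apply Theorem~\ref{thm:intromain} to $A$ and $B$; you merely spell out the transitivity-of-commensurability step that the paper leaves implicit. One imprecision worth fixing: Grushko plus accessibility does \emph{not} give a literal free product decomposition of $G$ into one-ended, finite, and infinite cyclic factors --- a freely indecomposable Grushko factor can itself be infinite-ended (e.g.\ $\SL_2(\Z) \cong \Z/4 *_{\Z/2} \Z/6$ has nontrivial center, hence is freely indecomposable but not one-ended, finite, or cyclic). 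The correct statement, which the paper invokes, is the Stallings--Dunwoody decomposition as a finite graph of groups with finite edge groups and finite or one-ended vertex groups; this is also what the hypothesis ``one-ended factors'' presupposes, and your argument goes through unchanged once the decomposition is sourced this way, since everything downstream uses only the set of quasi-isometry classes of the one-ended factors.
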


\begin{proof}[Proof of Theorem~\ref{thm:introhyp}]
 A multi-ended hyperbolic group which is not virtually free has finitely many one-ended factors. If the group is residually finite, then each factor is a residually-finite hyperbolic group and, in particular, has a proper finite-index subgroup. Every hyperbolic group has finite asymptotic dimension~\cite{roe}, and so there exists $m$ such that the asymptotic dimension of all the factors is bounded by $m$.
    Let $\cC$ be the family of one-ended hyperbolic groups whose asymptotic dimension is bounded by $m$. 
    By \cite[Proof of Theorem~A]{lazarovich23}, the family $\cC$ has a multiplicative invariant, namely $\underline{C}_{2,m}$, that is nonzero for every element of $\cC$.
    The conclusion now follows from Corollary~\ref{main cor}.
\end{proof}

The free products $G_1* \ldots * G_n$ and $H_1 * \ldots *H_n$ are quasi-isometric by work of Papasoglu--Whyte~\cite{papasogluwhyte} since $G_i$ and $H_i$ are quasi-isometric for each $i$. It remains to prove the groups are not abstractly commensurable. To do this, given a multiplicative invariant~$\rho$ for a family of groups $\cC$, we construct a multiplicative invarant $\chi_\rho$ on the family of free products whose non-free free factors are elements of $\cC$, and we adapt an argument of Whyte~\cite[Theorem 1.6]{whyte1999amenability}. 
We note that Sykiotis~\cite{sykiotis} defines a related multiplicative invariant for infinite-ended groups through a limiting process; our construction is simpler and more direct. 

To prove that two groups are quasi-isometric, often one finds geometric actions of the groups on the same proper metric space - in this case, the groups are said to have a {\it common model geometry}. Theorem~\ref{thm:intromain} can be used to exhibit quasi-isometric groups with no common model geometry. Indeed, such examples arise via finitely generated groups that are not quasi-isometrically rigid, but are {\it action rigid}, meaning that whenever the group and another finitely generated group act geometrically on the same proper metric space, the groups are abstractly commensurable. Many examples of action rigid free products are given in~\cite{starkwoodhouse19, mssw}, including free products of closed hyperbolic manifold groups.

\subsection*{Acknowledgements} 
    NL was supported by Israel Science Foundation (grant no. 1562/19).
    ES was supported by NSF Grant No. DMS-2204339.

    \section{Preliminaries} \label{sec:prelims}
 
    We will use the notion of a graph of spaces and a graph of groups; we refer the reader to~\cite{SerreTrees, ScottWall79} for additional background.

\begin{defn} (Graph notation.)
    A {\it graph} $\Gamma$ is a set of vertices $V\Gamma$ together with a set of edges $E\Gamma$. An edge $e$ is oriented with an initial vertex $\iota(e)$ and a terminal vertex~$\tau(e)$. For every edge $e$ there is an edge $\bar{e}$ with reversed orientation so that $\iota(e) = \tau(\bar{e})$. Further, $e \neq \bar{e}$ and $e = \bar{\bar{e}}$.     
\end{defn}

    We often suppress the orientation of an edge if it is not relevant to a construction. 

 \begin{defn} \label{defn:GoS}
A \emph{graph of spaces} $(X,\Gamma)$ consists of the following data:
\begin{enumerate}
    \item a graph $\Gamma$ called the {\it underlying graph}, 
	\item a connected space $X_v$ for each $v \in V\Gamma$ called a \emph{vertex space},
	\item a connected space $X_e$ for each $e\in E\Gamma$ called an \emph{edge space} such that $X_{\bar{e}} = X_e$, and
	\item a $\pi_1$-injective map $\phi_e : X_e \rightarrow X_{\tau(e)}$ for each $e\in E\Gamma$.
\end{enumerate}

The \emph{total space} $X$ is the following quotient space:
\[
 X = \left( \bigsqcup_{v \in V \Gamma} X_v \bigsqcup_{e \in E\Gamma} \bigl( X_e \times [0,1]\bigr) \right)  \; \Big/ \; \sim, 
\]
where $\sim$ is the relation that identifies $X_e \times \{0\}$ with $X_{\bar{e}} \times \{0\}$ via the identification of $X_e$ with $X_{\bar{e}}$, and the point $(x, 1)\in X_e \times [0,1]$ with $\phi_e(x)\in X_{\tau(e)}$.
\end{defn}

\begin{defn} \label{def:gog_gosp} 
      A \emph{graph of groups} $\mathcal{G}$ consists of the following data:
      \begin{enumerate}
      	\item a graph $\Gamma$ called the \emph{underlying graph},
          \item a group $G_v$ for each $v \in V\Gamma$ called a \emph{vertex group},
          \item  a group $G_e$ for each $e \in E\Gamma$ called an \emph{edge group}, such that $G_e = G_{\bar{e}}$,
          \item and injective homomorphisms $\Theta_e: G_e \rightarrow G_{\tau(e)}$ for each $e \in E\Gamma$ (called \emph{edge maps}).    
      \end{enumerate}
           If $\cG$ is a graph of groups with underlying graph $\Gamma$, a \emph{graph of spaces associated to  $\mathcal{G}$} consists of a graph of spaces $(X,\Gamma)$ with points $x_v \in X_v$ and $x_e \in X_e$ so that $\pi_1(X_v,x_v) \cong G_v$ and $\pi_1(X_e, x_e) \cong G_e$ for all $v \in V\Gamma$ and $e \in E\Gamma$, and the maps $\phi_e:X_e\rightarrow X_{\tau(e)}$ are such that $(\phi_e)_* = \Theta_e$.  
      The \emph{fundamental group} of the graph of groups $\mathcal{G}$ is $\pi_1(X)$. The fundamental group is independent of the choice of $X$ and is denoted by $\pi_1(\cG)$.
      \end{defn}

     If $G \cong \pi_1(\cG)$, where $\cG$ is a graph of groups, then we may view each vertex group of $\cG$ as a subgroup of $G$ (up to conjugacy). 
     Suppose that $G = \pi_1(\cG) =\pi_1(X)$, where $\cG$ is a graph of groups and $X$ is the corresponding total space. If $G' \leq G$, then $G' = \pi_1(X')$, where $X'$ covers $X$. This cover $q:X' \rightarrow X$ naturally yields a graph of spaces decomposition of $X'$ with underlying graph $\Gamma'$, where the edge and vertex spaces are components of the preimages of the edge and vertex spaces in~$X$. Hence, $G'$ is the fundamental group of a corresponding graph of groups $\cG'$. Each vertex group of $\cG'$ is then conjugate to a subgroup of a vertex group of $\cG$. 

\begin{defn}
    With the notation from the previous paragraph, if $G_v$ is a vertex group of~$\cG$, we say a vertex group $G_u'$ of $\cG'$ {\it covers} $G_v$ if $G_u'$ is conjugate to a subgroup of $G_v$ via the covering of the corresponding graphs of spaces. 
\end{defn}

We make use of the following observation. 
     
\begin{obs} \label{obs:multpreimage}
   Suppose $\cC$ is a family of groups closed with respect to taking finite-index subgroups and with a multiplicative invariant $\rho$. Suppose $\cG$ is a graph of groups with each vertex group in $\cC$ and $G = \pi_1(\cG)$. 
   Suppose $G'$ is an index-$k$ subgroup of $G$ and $\cG'$ is its corresponding graph of groups decomposition. Then for each vertex group $G_v \in \cG$, if $K = \{G'_{u_1}, \ldots, G'_{u_n}\}$ is the collection of vertex groups of $\cG'$ covering $G_v$, then \[\sum_{i=1}^n \rho(G'_{u_i}) = k \rho(G_v).\]  
\end{obs}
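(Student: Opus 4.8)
The plan is to pass to the covering space picture and reduce the identity to a fiber-counting argument combined with the multiplicativity of $\rho$. Write $G = \pi_1(X)$, where $X$ is the total space of a graph of spaces associated to $\cG$, and let $q : X' \to X$ be the covering corresponding to the finite-index subgroup $G' \leq G$, so that $q$ has degree $k = [G:G']$. By the discussion preceding the observation, the induced graph of spaces decomposition of $X'$ has vertex spaces equal to the connected components of the preimages $q^{-1}(X_v)$, and a vertex group $G'_{u}$ covers $G_v$ precisely when the corresponding vertex space $X'_{u}$ is a component of $q^{-1}(X_v)$; thus the collection $K$ is exactly the set of components of $q^{-1}(X_v)$.

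First I would fix the vertex space $X_v$ and consider the restriction $q\colon q^{-1}(X_v) \to X_v$. Since the restriction of a covering map to the full preimage of a subspace is again a covering map of the same degree, this restriction is a degree-$k$ covering. Writing $q^{-1}(X_v) = \bigsqcup_{i=1}^{n} X'_{u_i}$ as the disjoint union of its connected components, each restriction $q\colon X'_{u_i} \to X_v$ is a connected covering of some finite degree $d_i$, and on fundamental groups $G'_{u_i} = \pi_1(X'_{u_i})$ embeds as an index-$d_i$ subgroup of $G_v = \pi_1(X_v)$ (up to conjugacy).

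The key step is the fiber count. Choosing a basepoint $x \in X_v$, the fiber $q^{-1}(x)$ contains exactly $k$ points because $q$ has degree $k$, and these points are partitioned among the components $X'_{u_i}$, with $X'_{u_i}$ containing exactly $d_i$ of them. Hence $\sum_{i=1}^{n} d_i = k$. Note in particular that each $d_i \leq k$ is finite, which is what guarantees that $G'_{u_i}$ is a finite-index subgroup of $G_v$ so that multiplicativity will apply.

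Finally I would invoke multiplicativity. Since $G_v \in \cC$ and $\cC$ is closed under passing to finite-index subgroups (with $\rho$ defined on isomorphism classes), each $G'_{u_i}$, being isomorphic to an index-$d_i$ subgroup of $G_v$, lies in $\cC$ and satisfies $\rho(G'_{u_i}) = d_i\,\rho(G_v)$. Summing over $i$ and using $\sum_i d_i = k$ yields
\[
\sum_{i=1}^{n} \rho(G'_{u_i}) = \left(\sum_{i=1}^{n} d_i\right)\rho(G_v) = k\,\rho(G_v),
\]
as required. I expect the only genuine subtlety to be the identification of the components of $q^{-1}(X_v)$ with the covering vertex groups together with the fiber-counting step; once the relation $\sum_i d_i = k$ is in hand, the remainder is a direct application of the definition of a multiplicative invariant.
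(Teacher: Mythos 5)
Your argument is correct: the paper states this as an Observation without proof, and the covering-space fiber count you give (restricting the degree-$k$ cover to $q^{-1}(X_v)$, decomposing into components of degrees $d_i$ with $\sum_i d_i = k$, then applying multiplicativity to each index-$d_i$ subgroup) is exactly the argument the paper's preceding discussion of induced graph-of-spaces decompositions intends. No gaps; this matches the paper's implicit reasoning.
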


    \section{Multiplicative invariants and commensurability}

\begin{notation}
    For $r \geq 0$, let $F_r$ denote the free group of rank $r$. Throughout this section let $\cC$ be a family of one-ended groups that is closed with respect to taking finite-index subgroups and has a multiplicative invariant $\rho$. Let $\cC'$ denote the family of groups of the form $G = G_1 * \ldots * G_n * F_r$ for some $r \geq 0$, $n \geq 0$, and $G_i \in \cC$. Equivalently, $G \in \cC'$ if $G$ is the fundamental group of a graph of groups $\cG$ with trivial edge groups and each vertex group either trivial or in $\cC$. 
\end{notation}

    We will work in the setting of free products, rather than the general infinite-ended case, as the arguments are cleaner. See Remark~\ref{rem:infendedgen} regarding the more general setting.

\begin{defn}
    Let $\rho$ be a multiplicative invariant on the family of groups $\cC$.  Define a function $\chi_{\rho}:\cC' \rightarrow \R$  by
    \[ \chi_{\rho}(G_1*\ldots *G_n*F_r) = 1-r-n+ \sum_{i=1}^n \rho(G_i).\]
\end{defn}

\begin{rem}
    The function $\chi_{\rho}$ is a variant of the Euler characteristic. Viewing $G \in \cC'$ as the fundamental group of a graph of groups, each vertex in the graph of groups decomposition labeled $G_i \in \cC$ is assigned $\rho(G_i)$ rather than $1$. In particular, if the multiplicative invariant $\rho$ is Euler characteristic, then $\chi_{\rho}$ is as well. 
\end{rem}

To prove that $\chi_{\rho}$ is a multiplicative invariant, we will use the following. 

\begin{defn}
    Let $\cG$ be a graph of groups with underlying graph $\Gamma$, trivial edge groups, and with nontrivial vertex groups $G_1, \ldots, G_n \in \cC$. Define
    \[ \chi_{\rho}(\cG) = \chi(\Gamma) - n + \sum_{i=1}^n \rho(G_i). \]
\end{defn}

\begin{lem} \label{lemma:defs_agree}
    Let $G = G_1 * \ldots G_n * F_r \in \cC'$, and suppose $G \cong \pi_1(\cG)$, where $\cG$ is a graph of groups with underlying graph $\Gamma$, trivial edge groups, and with nontrivial vertex groups $G_1, \ldots, G_n \in \cC$. Then, $\chi_{\rho}(G) = \chi_{\rho}(\cG)$. 
\end{lem}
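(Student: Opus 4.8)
The plan is to compute the quantity $\chi_{\rho}(\cG)$ directly from the combinatorics of the underlying graph $\Gamma$ and show it collapses to the defined value $\chi_{\rho}(G) = 1 - r - n + \sum_{i=1}^n \rho(G_i)$. The only place where the two expressions could differ is the comparison between $1 - r$ (appearing in $\chi_{\rho}(G)$) and $\chi(\Gamma)$ (appearing in $\chi_{\rho}(\cG)$), since the term $-n + \sum_{i=1}^n \rho(G_i)$ is identical in both. So the entire content reduces to showing $\chi(\Gamma) = 1 - r$.

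First I would recall that since $\cG$ has trivial edge groups, its fundamental group is the free product of the vertex groups together with a free group of rank equal to the first Betti number of the graph $\Gamma$; concretely, $\pi_1(\cG) \cong G_1 * \cdots * G_n * F_b$, where $b = b_1(\Gamma) = |E\Gamma| - |V\Gamma| + 1$ (using that $\Gamma$ is connected, which it must be as $G$ is connected/finitely generated, and counting geometric rather than oriented edges). Next I would invoke the uniqueness portion of the Grushko decomposition (or the Kurosh subgroup theorem): the isomorphism $G \cong \pi_1(\cG)$ together with the hypothesis $G = G_1 * \cdots * G_n * F_r$ and the fact that each $G_i \in \cC$ is one-ended (hence freely indecomposable and non-trivial) forces the free-group rank to match, i.e.\ $b = r$. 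Here I would note that the $n$ one-ended factors appearing in $\cG$ are exactly the freely indecomposable non-trivial free factors, so the free rank is an invariant of $G$.

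With $b = r$ established, the computation is immediate: $\chi(\Gamma) = |V\Gamma| - |E\Gamma| = 1 - b_1(\Gamma) = 1 - b = 1 - r$, so that
\[
\chi_{\rho}(\cG) = \chi(\Gamma) - n + \sum_{i=1}^n \rho(G_i) = 1 - r - n + \sum_{i=1}^n \rho(G_i) = \chi_{\rho}(G).
\]
I would be slightly careful about the edge-counting convention, since the paper's \emph{Definition} treats each edge $e$ and its reverse $\bar e$ as distinct elements of $E\Gamma$; depending on whether $\chi(\Gamma) = |V\Gamma| - |E\Gamma|$ counts oriented or geometric edges, a factor of $2$ on the edge count could appear, so I would fix $\chi(\Gamma)$ to mean $|V\Gamma| - \tfrac{1}{2}|E\Gamma|$ (i.e.\ one contribution per geometric edge) to match the standard Euler characteristic of a topological graph.

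The main obstacle is the identification $b = r$, i.e.\ showing that the free-group rank of the splitting is independent of the chosen decomposition $\cG$. This is where the hypothesis that each $G_i$ is one-ended does real work: one-endedness guarantees freely indecomposable and non-trivial, so the factors $G_1, \dots, G_n$ are genuine Grushko factors and cannot absorb or contribute to the free part. Everything else is bookkeeping; I expect this step to rest on citing the uniqueness of the Grushko--Kurosh free-product decomposition rather than on any new argument.
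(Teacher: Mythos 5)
Your proof is correct and takes essentially the same route as the paper: both reduce the lemma to showing $\chi(\Gamma) = 1-r$ by identifying $\pi_1(\Gamma) \cong F_r$. The paper simply asserts this identification ``from the definition of the fundamental group of a graph of groups,'' whereas you explicitly justify the rank matching via Grushko--Kurosh uniqueness using the one-endedness of the $G_i$ --- a worthwhile clarification of the same argument rather than a different one.
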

\begin{proof}
    By definition, 
    \begin{eqnarray*}
        \chi_{\rho}(G)  &=& 1-r-n + \sum_{i=1}^n\rho(G_i), \textrm{ and } \\
        \chi_{\rho}(\cG) &=& \chi(\Gamma) - n + \sum_{i=1}^n\rho(G_i). 
    \end{eqnarray*}
    Thus, we must show $\chi(\Gamma) = 1-r$. Indeed, it follows from the definition of the fundamental group of a graph of groups that $F_r \cong \pi_1(\Gamma)$. So, $\chi(\Gamma) = \chi(F_r) = 1-r$. 
\end{proof}

\begin{prop} \label{prop:prime_mult}
    The function $\chi_{\rho}:\cC' \rightarrow \R$ is a multiplicative invariant. 
\end{prop}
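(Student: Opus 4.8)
The plan is to fix a finite-index subgroup $H \leq G$ with $[G:H] = k$ and prove $\chi_{\rho}(H) = k\,\chi_{\rho}(G)$ by passing to a covering space of a graph of spaces that realizes $G$. First I would write $G = \pi_1(\cG)$ for a graph of groups $\cG$ with underlying graph $\Gamma$, trivial edge groups, and nontrivial vertex groups $G_1,\dots,G_n \in \cC$, and let $X$ be an associated graph of spaces, so $G = \pi_1(X)$. The subgroup $H$ corresponds to a degree-$k$ cover $q\colon X' \to X$, which inherits a graph-of-spaces decomposition with underlying graph $\Gamma'$ whose vertex and edge spaces are the components of the $q$-preimages of those of $X$, giving a graph of groups $\cG'$ with $H = \pi_1(\cG')$. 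Because each edge group of $\cG$ is trivial and each vertex group is trivial or in $\cC$, the same holds for $\cG'$: the vertex groups lying over a nontrivial $G_i$ are finite-index subgroups of $G_i$, hence lie in $\cC$. Thus $H \in \cC'$, and by Lemma~\ref{lemma:defs_agree} we have $\chi_{\rho}(H) = \chi_{\rho}(\cG')$ and $\chi_{\rho}(G) = \chi_{\rho}(\cG)$, so it suffices to show $\chi_{\rho}(\cG') = k\,\chi_{\rho}(\cG)$.

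I would organize this as a weighted Euler characteristic count. Setting $\tilde\rho(v) = \rho(G_v)$ when $G_v$ is nontrivial and $\tilde\rho(v) = 1$ when $G_v$ is trivial, a short rearrangement of the definition gives $\chi_{\rho}(\cG) = \sum_{v \in V\Gamma}\tilde\rho(v) - |E\Gamma|$, where $|E\Gamma|$ is the number of geometric edges (trivial edge groups contribute weight $1$ to each edge), and similarly for $\cG'$. It is worth emphasizing at this point that $\Gamma' \to \Gamma$ is in general \emph{not} a covering of graphs, since over a nontrivial vertex the number of lifts may be strictly less than $k$; this is exactly what the weights $\tilde\rho$ are designed to correct.

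For the vertex sum I would split according to the type of vertex. Over a nontrivial vertex $v$ with group $G_v$, Observation~\ref{obs:multpreimage} gives $\sum_{u \to v}\rho(G'_u) = k\,\rho(G_v) = k\,\tilde\rho(v)$. Over a trivial vertex $v$, the vertex space $X_v$ is simply connected, so $q^{-1}(X_v)$ is a disjoint union of exactly $k$ degree-one copies, yielding $k$ trivial vertices and again $\sum_{u \to v}\tilde\rho(u) = k = k\,\tilde\rho(v)$. Summing over $v$ gives $\sum_{u \in V\Gamma'}\tilde\rho(u) = k\sum_{v \in V\Gamma}\tilde\rho(v)$. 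For the edges, every edge group of $\cG$ is trivial, so each edge space $X_e$ is simply connected, the restriction of $q$ to $q^{-1}(X_e)$ is a trivial degree-$k$ cover, and $e$ lifts to exactly $k$ edges of $\Gamma'$; hence $|E\Gamma'| = k\,|E\Gamma|$. Combining, $\chi_{\rho}(\cG') = \sum_{u}\tilde\rho(u) - |E\Gamma'| = k\bigl(\sum_{v}\tilde\rho(v) - |E\Gamma|\bigr) = k\,\chi_{\rho}(\cG)$.

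I expect the main obstacle to be bookkeeping rather than a single hard idea: one must verify carefully that the induced decomposition $\cG'$ really has trivial edge groups and vertex groups that are trivial or in $\cC$, so that Lemma~\ref{lemma:defs_agree} applies and $H \in \cC'$, and that simple-connectivity of the trivial edge and vertex spaces genuinely forces $k$ degree-one lifts. The only multiplicative input is concentrated in Observation~\ref{obs:multpreimage}, which converts the varying distribution of covering degrees among the vertices over a fixed $G_i$ into the clean identity $\sum \rho(G'_u) = k\,\rho(G_i)$; the remaining content is simply checking that the free (graph) part and the trivial pieces scale by the plain covering degree $k$.
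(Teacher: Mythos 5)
Your proposal is correct and follows essentially the same route as the paper: pass to the degree-$k$ cover of a graph of spaces with trivial edge groups, count $k$ lifts of each trivial vertex and edge, apply Observation~\ref{obs:multpreimage} over the nontrivial vertices, and conclude via Lemma~\ref{lemma:defs_agree}. The only cosmetic difference is that the paper fixes a specific star-shaped underlying graph (one central trivial vertex, $n$ spokes, $r$ loops) while you run the same count over an arbitrary underlying graph using a weighted Euler characteristic.
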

\begin{proof}
    Let $G = G_1 * \ldots * G_n * F_r \in \cC'$. Realize $G$ as the fundamental group of a graph of groups $\cG$ whose underlying graph $\Gamma$ has vertex set $V\Gamma = \{v_0, \ldots, v_n\}$ and (unoriented) edge set $E\Gamma = \{e_1, \ldots, e_n, a_1, \ldots, a_r\}$, where $e_i = \{v_0, v_i\}$ for $1 \leq i \leq n$, and $a_i$ is a loop based at $v_0$ for $1 \leq i \leq r$. Suppose each edge group is trivial, the vertex group $G_{v_0}$ is trivial, and the vertex group $G_{v_i}$ is $G_i$. 

    Let $G' \leq G$ be an index-$k$ subgroup. As discussed in Section~\ref{sec:prelims}, $G' \cong \pi_1(\cG')$, where $\cG'$ is a graph of groups with: 
    \begin{itemize}
        \item $k$ vertices with trivial vertex group that each cover $G_{v_0} = \la 1 \ra$,
        \item $k$ edges with trivial edge group that each cover $G_{a_i} = \la 1 \ra$ for $1 \leq i \leq r$,
        \item $k$ edges with trivial edge group that each cover $G_{e_i} = \la 1 \ra$ for $1 \leq i \leq n$, and
        \item for $i \in \{1, \ldots n \}$, there exists $n_i\in \N_{\geq 1}$ and 
        $n_i$ vertices $v_i^1, \ldots, v_i^{n_i}$ with vertex group $G_{v_i^j}$ that each cover $G_{v_i}$. Further, \[\sum_{j=1}^{n_i} \rho(G_{v_i^j}) = k\rho(G_i)\] by Observation~\ref{obs:multpreimage} since $\rho$ is a multiplicative invariant. 
    \end{itemize}
    Thus, $\chi_{\rho}(\cG') = k \chi_{\rho}(\cG)$.
    So, by Lemma~\ref{lemma:defs_agree}, $\chi_{\rho}(G') = \chi_{\rho}(\cG') = k \chi_{\rho}(\cG) = k \chi_{\rho}(G)$. 
\end{proof}

As explained in the introduction, Theorem~\ref{thm:intromain} follows from the next theorem. The proof is analogous to Whyte's argument in \cite[Theorem 1.6]{whyte1999amenability}.

\begin{thm} \label{thm:mainpaper}
    Let $\cC$ be a family of groups closed with respect to taking finite-index subgroups and with a multiplicative invariant  $\rho$ that is nonzero for every element of $\cC$. For $m>1$ and $1 \leq i \leq m$, let $G_i \in \cC$, and suppose $H_i \leq G_i$ is a proper finite-index subgroup. Then, the groups $G=G_1* \ldots * G_m$ and $H=H_1 * \ldots *H_m$ are not abstractly commensurable. 
\end{thm}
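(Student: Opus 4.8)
The plan is to argue by contradiction using the multiplicative invariant $\chi_\rho$ built in Proposition~\ref{prop:prime_mult}, exploiting that it scales predictably under passage to finite-index subgroups while the free-product structure is rigid enough, via the Kurosh/Grushko decomposition, to pin down the relevant indices. The first step is to reduce to the case $\rho>0$. Since every index $[G:H]$ is a positive integer, the function $|\rho|$ satisfies $|\rho(H)| = [G:H]\,|\rho(G)|$, so $|\rho|$ is again a multiplicative invariant on $\cC$, and it is strictly positive because $\rho$ is nowhere zero. Applying Proposition~\ref{prop:prime_mult} to $|\rho|$ then furnishes a multiplicative invariant $\chi_{|\rho|}$ on $\cC'$. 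This is precisely where the hypothesis that $\rho$ is nonzero (rather than of a fixed sign) is all that is needed.

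Next, suppose toward a contradiction that $G$ and $H$ are abstractly commensurable, so there are finite-index subgroups $K \le G$ and $K' \le H$ with $K \cong K'$; write $k = [G:K]$ and $k' = [H:K']$. Both $K$ and $K'$ lie in $\cC'$: applying the covering-space and graph-of-groups description from Section~\ref{sec:prelims} to the free splittings of $G$ and $H$, the nontrivial vertex groups of the induced decompositions are finite-index subgroups of the $G_i$ (resp.\ the $H_i$), hence lie in $\cC$, while the underlying graph contributes a free part. I would then extract from this decomposition the two isomorphism invariants of $K$ that do not involve $\rho$: the number $p$ of one-ended (Grushko) free factors and the free rank $s$. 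Realizing $G$ as in the proof of Proposition~\ref{prop:prime_mult} (a star with central trivial vertex, $m$ edges, and no loops), the index-$k$ cover has underlying graph $\Gamma'$ with $V(\Gamma') = k + p$ and $E(\Gamma') = mk$, whence $s = b_1(\Gamma') = 1 - p + k(m-1)$; the analogous computation for $K'$ gives $s' = 1 - p' + k'(m-1)$. By uniqueness of the Grushko decomposition, $K \cong K'$ forces $p = p'$ and $s = s'$, and since $m > 1$ these two equalities yield $k = k'$. Extracting $k = k'$ from the combinatorics of the decompositions is the crux of the argument, mirroring Whyte's, and is the place where the hypothesis $m > 1$ is indispensable.

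Finally, with $k = k'$ in hand I would compare the $\rho$-dependent invariant. Since $\chi_{|\rho|}$ is multiplicative, $\chi_{|\rho|}(K) = k\,\chi_{|\rho|}(G)$ and $\chi_{|\rho|}(K') = k'\,\chi_{|\rho|}(H)$, while $\chi_{|\rho|}(K) = \chi_{|\rho|}(K')$ because $\chi_{|\rho|}$ depends only on the isomorphism type. Hence $\chi_{|\rho|}(G) = \chi_{|\rho|}(H)$. Writing $d_i = [G_i : H_i] \ge 2$ and using $|\rho(H_i)| = d_i\,|\rho(G_i)|$, the definition of $\chi_{|\rho|}$ gives
\[
\chi_{|\rho|}(G) - \chi_{|\rho|}(H) = \sum_{i=1}^m |\rho(G_i)| - \sum_{i=1}^m d_i\,|\rho(G_i)| = -\sum_{i=1}^m (d_i - 1)\,|\rho(G_i)|.
\]
Each summand $(d_i - 1)\,|\rho(G_i)|$ is strictly positive, since $d_i \ge 2$ by properness of the inclusion $H_i \le G_i$ and $|\rho(G_i)| > 0$ by nonvanishing of $\rho$. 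Thus the right-hand side is strictly negative, contradicting $\chi_{|\rho|}(G) = \chi_{|\rho|}(H)$, and the proof is complete.
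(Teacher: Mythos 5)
Your proof is correct, and its skeleton --- contradiction, the invariant $\chi_\rho$ of Proposition~\ref{prop:prime_mult}, the identity $k=k'$, then a comparison of the invariants of $G$ and $H$ --- is the paper's. Two steps differ in ways worth recording. First, you get $k=k'$ by pure combinatorics: counting vertices and edges of the covering graph gives free rank $s=1-p+k(m-1)$, and Grushko/Kurosh uniqueness makes the pair $(p,s)$ an isomorphism invariant of the common subgroup, so $m>1$ forces $k=k'$; this step never touches $\rho$. The paper derives the same identity $k=\frac{1-r-n}{1-m}$ by expanding $\chi_\rho(G')=k\chi_\rho(G)$ in two ways and cancelling the $\rho$-terms via Observation~\ref{obs:multpreimage}; its remark following the theorem concedes that this claim does not use $\rho\neq 0$, so the two derivations are morally identical, yours being the more transparent. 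Second, and more substantively, your replacement of $\rho$ by $|\rho|$ (still multiplicative, since indices are positive integers) genuinely strengthens the endgame: the paper finishes by asserting $\chi_\rho(H)<\chi_\rho(G)$, but
\[
\chi_\rho(G)-\chi_\rho(H)=-\sum_{i=1}^m\bigl([G_i:H_i]-1\bigr)\rho(G_i)
\]
could vanish if the values $\rho(G_i)$ have mixed signs, so the paper's final strict inequality implicitly needs a sign normalization --- exactly the one you supply. Your version closes that small gap while otherwise following the same route.
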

\begin{proof}
    Suppose towards a contradiction that there exist finite-index subgroups $G' \leq G$ and $H' \leq H$ of index $k$ and $k'$, respectively, so that $G' \cong H'$. Suppose that $H' \cong G' \cong G_1' *  \ldots*  G_n' * F_r$ where $r \geq 0$ and for each $i \in \{1, \ldots, n\}$, the group $G_i'$ covers $G_j$ and $H_{\ell}$ for some $j, \ell \in \{1,\ldots, m\}$. 
    By Proposition~\ref{prop:prime_mult}, $\chi_{\rho}(G') =  k \chi_{\rho}(G)= k' \chi_{\rho}(H)$. We will show that $k = k'$ by relating these numbers to $r,n$, and $m$. This will complete the proof since $\chi_{\rho}(H) < \chi_{\rho}(G)$ because $\rho$ is a multiplicative invariant. 

    \begin{claim}\label{claim about index}
        $k = k' = \frac{1-r-n}{1-m}$.
    \end{claim}
    \begin{proof}[Proof of Claim.] 
        We prove the claim for $k$; the proof for $k'$ is analogous. First, by Proposition~\ref{prop:prime_mult},
            \begin{eqnarray}
                \chi_{\rho}(G') &=& k\chi_{\rho}(G) \\
                 &=& k \left( 1-m + \sum_{i=1}^m \rho(G_i) \right).
            \end{eqnarray}
        Next, by the definition of $\chi_{\rho}$,
           \begin{eqnarray}
               \chi_{\rho}(G') &=& 1-r-n + \sum_{i=1}^n \rho(G_i').
           \end{eqnarray}
                           
        For $1 \leq i \leq m$, let $K_i = \{G_{i_j}'\}_{j=1}^{n_i}$ and $i_j \in \{1, \ldots, n\}$ be the collection of free factors of $G'$ so that $G_{i_j}'$ covers $G_i$. Then, for all $1 \leq i \leq m$, \[ \sum_{j=1}^{n_i} \rho(G_{i_j}') = k\rho(G_i) \]
        by Observation~\ref{obs:multpreimage}.
        Since the $K_i$ partition the set $\{G_i'\}_{i=1}^n$, Equation~(3) can be rewritten as 
        \begin{eqnarray}
            \chi_{\rho}(G') &=& 1-r-n + \left( \sum_{j=1}^{n_1} \rho(G_{i_j}') + \ldots + \sum_{j=1}^{n_m} \rho(G_{i_j}')  \right) \\
            &=& 1-r-n + k\sum_{i=1}^m \rho(G_i).
        \end{eqnarray}
        Combining Equation~(2) and Equation~(5) yields
        \begin{eqnarray}
          k \left( 1-m + \sum_{i=1}^m \rho(G_i) \right)  &=& 1-r-n + k\sum_{i=1}^m \rho(G_i).
        \end{eqnarray}
        Thus, $k = \frac{1-r-n}{1-m}$. 
    \end{proof}

    Therefore, $k=k'$, completing the proof of the theorem. 
\end{proof}

\begin{rem}
Note that in the proof of Claim~\ref{claim about index} we have not used that the multiplicative invariant $\rho$ is non-zero. In fact, the same proof works for the multiplicative invariant $\rho(G)=1/|G|$ (where $1/\infty =0$).
\end{rem}

\begin{rem} (Weakening the hypotheses.)
    If $G = G_1 * \ldots * G_m$ and $H = H_1 * \ldots * H_m$ are in the family $\cC'$ and $\chi_{\rho}(G)>\chi_{\rho}(H)$, then the proof of the theorem above shows that $G$ and $H$ are not abstractly commensurable. So, for example, one can weaken the hypothesis of the theorem to asking that $H_i \leq G_i$ is a {\it proper} finite-index subgroup for only one $i \in \{1, \ldots, n\}$. 
\end{rem}

    The general infinite-ended case now follows from Theorem~\ref{thm:mainpaper}. 

\begin{proof}[Proof of Corollary~\ref{main cor}]
    Suppose $G$ is a finitely presented infinite-ended group. Work of Stallings and Dunwoody~\cite{stallings,dunwoody1985accessibility} proves that $G$ is the fundamental group of a finite graph of groups with finite edge groups and vertex groups that have at most one end. If $G$ is not virtually free then there is at least one one-ended factor in this decomposition. Suppose the one-ended vertex groups in this decomposition are $G_1, \ldots, G_n$ for some $n\geq 1$ and belong to a family of groups $\cC$ that is closed under taking finite-index subgroups. Suppose $\cC$ has a multiplicative invariant that is nonzero for every element of $\cC$. Suppose further that each one-ended factor of $G$ contains a non-trivial finite-index subgroup. 
    The group $G$ is quasi-isometric to the free product $G_1 * G_1 * \ldots * G_n$ by work of Papasoglu--Whyte~\cite{papasogluwhyte}. Thus, $G$ is not quasi-isometrically rigid by Theorem~\ref{thm:mainpaper}.     
\end{proof}


\bibliographystyle{alpha}
\bibliography{mybib}

\end{document}